\newtheorem{thm}{Theorem}[section]
\newtheorem{theorem}{Theorem}[section]
 \newtheorem{prop}{Proposition}[section]
\newtheorem{lemma}[theorem]{Lemma}
\theoremstyle{definition}
\newtheorem{defi}{Definition}[section]
\theoremstyle{remark}
\newtheorem{rems}{Remarks}[section]
\begin{document}
\title[Bounds on the fundamental gap]{Optimal bounds on the 
fundamental spectral gap with single-well potentials}

\author{Evans M. Harrell II}
\address{School of Mathematics,
Georgia Institute of Technology,\
Atlanta GA 30332-0160, USA.} 
\email{ harrell@math.gatech.edu}

\author{Zakaria El Allali}
\address{Team of Modeling and Scientific Computing, 
Department of Mathematics and Computer,
Faculty Multidisciplinary of Nador, University Mohammed Premier, Morocco.}
\email{z.elallali@ump.ma}

\subjclass{34B27, 35J60, 35B05}
\date{July 21, 2018}

\keywords{Fundamental gap spectral, Schr\"odinger operator, 
single-well potentials, Dirichlet boundary conditions}

\begin{abstract}
We characterize the potential-energy functions $V(x)$ that 
minimize the gap $\Gamma$
between the two lowest
Sturm-Liouville eigenvalues for
\[
H(p,V) u :=  -\frac{d}{dx}
\left(p(x)\frac{du}{dx}\right)+V(x) u  = \lambda u, \quad\quad x\in [0,\pi ], 
\]
where separated self-adjoint
boundary conditions are imposed at end points, and $V$
is subject to various assumptions, especially convexity or having a ``single-well'' form.
In the classic case where $p=1$ we recover with different arguments the result of Lavine that
$\Gamma$ is uniquely minimized among convex $V$ by the constant, and in the case of
single-well potentials, with no restrictions on the position of the minimum,
we obtain a new, sharp bound, that  $\Gamma > 2.04575\dots$.
\end{abstract}

\maketitle

\section{Introduction}

In this article, we consider a
Sturm-Liouville  operator on a finite interval, scaled without loss of generality to have length $\pi$,
\begin{equation}\label{eP}
H(p,V) u :=  -
\frac{d}{dx}\left(p(x)\frac{du}{dx}\right)+V(x) u  = 
\lambda u, \quad\quad x\in [0,\pi], 
% &u(0)=u(\pi)=0,
\end{equation}
where $p(x)$ is a bounded $C^1$ function that is uniformly positive 
on $(0, \pi)$
and various assumptions are made on the potential energy $V(x)$, for instance that
$V(x) = V_0 \pm V_1$, where $V_1$ is either convex or
of ``single-well'' form and $V_0$ is fixed.
We shall always make assumptions so that $H$ is self-adjoint with purely discrete spectrum.  According to 
\cite{Wei}, \S 8.4, this is guaranteed without further conditions when $V(x) \ge C > -\infty$ is in the limit-point case, while if 
$V(x) \ge C > -\infty$ is in the limit-circle case, we may
impose any separated homogeneous boundary conditions of the form
\begin{align}\label{bc}
u(0) \cos \alpha - (pu')(0) \sin \alpha &= 0;\nonumber\\
u(\pi) \cos \beta - (pu')(\pi) \sin \beta &= 0,
\end{align}
where $0\leq \alpha , \beta < \pi $ (In particular this encompasses the possibility of either Dirichlet or Neumann boundary conditions. 
See \cite{Zet}, \S 4.6 for further discussion of the boundary conditions for Sturm-Liouville problems.)
If $V(x)$ is not bounded from below, further conditions are required.  A sufficient but far from necessary 
condition that is good for our purposes is that 
\begin{equation}
V(x) \ge C - \frac{1}{4 x} - \frac{1}{4 (\pi-x)}
\end{equation}
for some $C > - \infty$.
Again, in the limit-circle case, we may fix any of the boundary conditions 
\eqref{bc} to make $H$ self-adjoint.

We denote the
eigenvalues  $\{\lambda_n\}$, $n = 1, 2, \dots$,
and arrange them in
nondecreasing order:
$$\lambda_1 \leq \lambda_2 \leq \lambda_3 \ldots \lambda_m \leq \cdots$$
The function $V$ is called a
{\em single-well function} if $V$ is non-increasing  on $[0,a ] $ and 
non-decreasing on $[a,\pi ] $
for some $a \in [0, \pi]$. The point $a$ is called a
{\em transition point}.  We do not assume that $a$ is unique.

The functional $\Gamma(V)$ defined as
$$\Gamma(V)=\lambda_2(V)-\lambda_1(V)$$
is called the
{\em fundamental gap}, 
of interest in quantum mechanics 
as the excitation energy to raise a particle out of its ground state.
There are both physical and mathematical 
reasons to study $\Gamma(V)$.  Most frequently in the literature, $p \equiv 1$,
that is, the equation is in Liouville normal form, sometimes
referred to as Schr\"odinger form.  (Given that any equation
of the form \eqref{eP} can be put into this form
(cf. \cite{BiRo}, \S10.9 or \cite{Tes}, \S 9.1), we observe that this transformation alters the spectral problem,
because the required transformation
depends on the eigenparameter.  Hence the inclusion of the 
function $p$ in the leading term is a true generalization.)
Although $\lambda_1$ is always simple, so that 
$\Gamma > 0$, there is no positive lower bound on $\Gamma$ without assumptions on $V$, as exemplified by 
double-well potentials such as $V = M \chi_{[\frac{\pi}{3},\frac{2 \pi}{2}]}$ where $M$ is arbitrarily large, e.g., \cite{AsHa82}.
In \cite{AsBe} Ashbaugh and Benguria found that the optimal lower bound for $\Gamma$ for
symmetric single-well potentials and Dirichlet boundary conditions
is achieved if and only if $V$ is constant on $(0,\pi)$.
In \cite{AsHaSv}
Ashbaugh, Harrell and  Svirsky studied the optimization 
of $\Gamma(V)$ under $L^p$ constraints (even in $n$ dimensions) by first proving existence of optimizers and then 
carrying out a variational analysis, a strategy that we shall also employ here.  An advance was made by 
Lavine in 1994 \cite{Lav}, who considered the class of convex potentials on $[0, \pi]$ and proved that
with either Dirichlet or Neumann boundary conditions, $\Gamma$ attains its minimum if and only of $V$
is constant.  Later, in 2002, Horv\'ath \cite{Horvath} returned with Lavine's methods to the problem of single-well potentials,
without symmetry assumptions, but assuming a transition point at $\frac{\pi}{2}$, and showed that
the gap for the Dirichlet problem is minimized when the potential is constant.   In 2015 Yu and Yang \cite{YuYa}
extended Horv\'ath's result by allowing other transition points (under a technical condition) and both Dirichlet and Neumann conditions.
In this article, we provide lower bounds for the gap between the first two eigenvalues of the problems $(\ref{eP})$  with general
single-well potential $V(x)$ with a transition point $a \in [0, \pi]$, without any restriction on $a$, and also for the case where the potential is convex.  We are furthermore able to analyze the case where $V = V_0 + V_1$, where $V_0$ is fixed 
and $V_1$ is assumed either single-well or convex.  In contrast to the earlier studies 
of single-well potentials, which restrict the transition point in one way or other,
the minimizing potentials we find
are {\em not} in general constant, although if extra conditions are imposed locating the 
transition point sufficiently far from $0$ or $\pi$, 
our methods can lead to constant potentials.  Although we do not pursue this idea here in detail,
this remark is a way of 
understanding the results in \cite{AsBe,Horvath,YuYa}.

\section{Classes of Potentials}

We define here several classes of functions that will play a role below.  

\begin{defi}\label{fnclasses}
For $0 \le M \le \infty$, let 

$\mathcal{S}_M := \{f(x): 0 \le f(x) \le M$: there exists $a \in [0, \pi]$
such that $f(x)$ is nonincreasing
for $x \le a$ and nondecreasing for
$x \ge a\}$.  We identify functions in $\mathcal{S}_M$ when they are equal a.e.

$\mathcal{C}_M := \{f(x): 0 \le f(x) \le M$: $f(x)$ is convex on $[0, \pi]$\}.
\end{defi}

Each of these sets has a useful compactness property when $M$ is bounded:

\begin{prop}\label{subseq}
Let $\mathcal{A}$ denote any of the sets in 
Definition {\rm\ref{fnclasses}} with $M < \infty$.
For any sequence $f_n \in \mathcal{A}$ there exist
a subsequence
$f_{n_k}$ and a function $f_\star$ such that $f_{n_k}(x) \to f_\star(x)$ a.e.  If $f_\star(x)$ is continuous on an interval $[b,c] \subset [0, \pi]$, then the convergence is uniform on that interval.
\end{prop}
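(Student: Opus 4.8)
The plan is to reduce the compactness to Helly's selection principle. First I note that every $f \in \mathcal{A}$ is nonincreasing on some initial segment $[0,a]$ and nondecreasing on the complementary segment $[a,\pi]$: for $\mathcal{S}_M$ this is the definition, and for $\mathcal{C}_M$ it holds because a convex function on $[0,\pi]$ that is bounded below attains, or approaches from within $(0,\pi)$, its infimum on a subinterval, to the left of which it is nonincreasing and to the right of which it is nondecreasing. Since $0 \le f \le M$, the total variation of $f$ on $[0,\pi]$ is then at most $(f(0)-f(a))+(f(\pi)-f(a)) \le 2M$. Hence any sequence $f_n \in \mathcal{A}$ is uniformly bounded and of uniformly bounded variation, so Helly's selection theorem yields a subsequence $f_{n_k}$ converging at every point of $[0,\pi]$ to a limit $f_\star$; this $f_\star$ is of bounded variation, and by passing to the limit in the defining monotonicity inequalities it is again of single-well (resp.\ convex) type. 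Convergence at every point is a fortiori convergence a.e., which is the first assertion. (If one wishes to avoid quoting Helly, the same subsequence can be produced by a diagonal extraction over the rationals together with Bolzano--Weierstrass, the monotone structure then forcing convergence at every point of continuity of the limit.)

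For the uniform statement on an interval $[b,c]$ on which $f_\star$ is continuous, pass to a further subsequence so that the transition points $a_{n_k}$ converge to some $a_\star \in [0,\pi]$. If $a_\star \notin (b,c)$, then for all large $k$ the functions $f_{n_k}$ are monotone on $[b,c]$ --- all nonincreasing if $a_\star \ge c$, all nondecreasing if $a_\star \le b$ --- and the classical fact that a pointwise-convergent sequence of monotone functions with continuous limit on a compact interval converges uniformly finishes this case; that fact is proved by choosing a finite partition of $[b,c]$ on which $f_\star$ oscillates by less than $\varepsilon$ (uniform continuity) and squeezing $f_{n_k}(x)$ between its values at the two surrounding partition points. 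If instead $a_\star \in (b,c)$, fix a small $\delta > 0$ and split $[b,c] = [b,a_\star-\delta] \cup [a_\star-\delta,a_\star+\delta] \cup [a_\star+\delta,c]$: on the two outer intervals the $f_{n_k}$ are eventually monotone, so the lemma just cited applies, while on the middle interval one uses that for large $k$ the point $a_{n_k}$ lies in it, so there $f_{n_k}$ is bounded above by $\max\bigl(f_{n_k}(a_\star-\delta),\,f_{n_k}(a_\star+\delta)\bigr)$ and below by its minimum value $f_{n_k}(a_{n_k}) \le f_{n_k}(a_\star)$; letting $k \to \infty$ and using $f_{n_k}(a_\star \pm \delta) \to f_\star(a_\star \pm \delta)$, $f_{n_k}(a_\star) \to f_\star(a_\star)$, together with continuity of $f_\star$ at $a_\star$, one bounds the oscillation on the middle piece by a quantity that tends to $0$ as $\delta \to 0$.

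I expect the behavior near the limiting transition point to be the only real obstacle: the upper bound on $f_{n_k}$ there is free from unimodality, but the lower bound requires that the minimum values $f_{n_k}(a_{n_k})$ not fall below $f_\star(a_\star)$ in the limit, and this is exactly where the continuity of $f_\star$ must be brought to bear. In the convex case $\mathcal{C}_M$ there is a cleaner route: a convex function with $0 \le f \le M$ on $[0,\pi]$ is automatically Lipschitz on every compact subinterval $[b,c] \subset (0,\pi)$, with constant at most $M\max\!\bigl(1/b,\,1/(\pi-c)\bigr)$ (from the monotonicity of difference quotients), so the family is equicontinuous there and Arzel\`a--Ascoli upgrades the pointwise convergence to uniform convergence directly; a short separate chord estimate handles neighborhoods of the endpoints $0$ and $\pi$ whenever $f_\star$ is continuous there. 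Everything else is routine bookkeeping with the monotone-convergence lemma.
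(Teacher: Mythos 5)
Your treatment of the first assertion is correct and takes a mildly different route from the paper: you note that every function in $\mathcal{S}_M$, and every bounded convex function (being unimodal), has total variation at most $2M$, and you invoke Helly's selection theorem for uniformly bounded sequences of bounded variation, which handles $\mathcal{S}_M$ and $\mathcal{C}_M$ in one stroke; the paper instead extracts a convergent subsequence of transition points and applies Helly's theorem for monotone sequences in the single-well case, and quotes the Blaschke selection theorem in the convex case. Your convex-case upgrade to uniform convergence (interior Lipschitz bounds plus Arzel\`a--Ascoli, with chord estimates at the endpoints) is sound, as is your argument on intervals $[b,c]$ that the transition points $a_{n_k}$ eventually avoid, via eventual monotonicity and the classical lemma on monotone sequences with continuous limit.

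The gap is exactly where you suspected it, and it cannot be closed. On the middle interval $[a_\star-\delta,a_\star+\delta]$ your lower bound for $f_{n_k}$ is $f_{n_k}(a_{n_k})$, and the inequality you record, $f_{n_k}(a_{n_k})\le f_{n_k}(a_\star)$, points the wrong way: continuity of $f_\star$ at $a_\star$ gives no control of the values at the moving transition points from below. In fact the uniform-convergence clause is false in this configuration: take $f_n(x)=\min\{1,\,n^2\,|x-\tfrac{\pi}{2}-\tfrac{1}{n}|\}$, which lies in $\mathcal{S}_1$ with transition point $a_n=\tfrac{\pi}{2}+\tfrac{1}{n}$; the full sequence (hence every subsequence) converges pointwise on $[0,\pi]$ to the constant $1$, which is continuous, yet $f_n(a_n)=0$, so the convergence is uniform on no interval $[b,c]$ with $b\le\tfrac{\pi}{2}<c$. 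Thus no argument can rescue the uniform statement when the $a_{n_k}$ accumulate in $[b,c]$; it is valid only for intervals with $a_\star\notin[b,c]$ (so that the $a_{n_k}$ eventually avoid $[b,c]$), or for the convex class. You are in good company: the paper's own proof asserts that one may pass to a subsequence that is monotone on $[0,a_\star]$ and on $[a_\star,\pi]$, which the same example refutes. Since only the a.e.\ (hence $L^p$) convergence is used in the existence theorem, the correct repair is to restrict the uniform-convergence clause rather than to seek a proof of it as stated.
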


\begin{proof}

Suppose first that
$\mathcal{A}=\mathcal{S}_M$.  Let $(f_{n},a_{n})$ be a
sequence of functions in 
$\mathcal{S}_M$ with $a_n$ the corresponding values as in
Definition \ref{fnclasses}.
By the Bolzano-Weierstrass theorem, there exists a first subsequence $(a_{n_{l}})$ of $(a_n)$ such that 
$a_{n_{l}}\longrightarrow a_{\star}$.
There may be more than one point of accumulation $a_{\star}$ but by taking the least of them and passing to a further subsequence,
if necessary,
we can obtain a sequence $f_{n_{l}}$ that is non-increasing on $[0,a_{\star}]$ and
non-decreasing on $[a_{\star},\pi]$.
We can now invoke a theorem of Helly \cite{Doob},  \S X.9 by which, for any sequence of 
uniformly bounded monotonic functions on a 
fixed, finite interval, there exists
$f_{\star}\in \mathcal{S}_M$ such that for a further subsequence,
$f_{n_{l}}(x)\searrow f_{\star}(x) \quad $ for all $x$ 
(uniformly on any compact interval $I$ on which $f_{\star}$ is continuous).

Suppose now that $\mathcal{A}=\mathcal{C}_M$.  Then the  Blaschke Selection Theorem
implies that for every $f_{n}(x)\in \mathcal{C}_M$,
there exists a uniformly convergent subsequence $(f_{n_{l}})$ of $(f_n)$, such that
$$f_{n_{l}}\longrightarrow f_{\star}, \quad \mathsf{with }\  f_{\star}\in \mathcal{C}_M.$$
{(In the case of convex functions, continuity is automatic on any strict subinterval of $[0, \pi]$.)}
\end{proof}

\begin{thm}\label{exist}

Let $\mathcal{A}$ be any of the function classes of 
Definition {\rm\ref{fnclasses}}, and
suppose that $V_1 \in \mathcal{A}$.
Consider the eigenvalue problem \eqref{eP} where $V = V_0 + V_1$, where one part is fixed,
$V_0(x) \ge C > - \infty$, and
standard boundary conditions of the type \eqref{bc} are imposed at either end point
$0$ or $\pi$ if it is in the
limit-circle case.
(Due to the boundedness of $V_1$ this depends only on $V_0$.)
Then the
eigenvalues $\lambda_k$ of 
\eqref{eP} are continuous with respect to the topology of pointwise convergence for $V \in \mathcal{A}$,
and there is a potential $V_{\text{min},*} \in \mathcal{A}$ that minimizes $\Gamma(p,V)$
for $V = V_0 +V_1$ with $V_1 \in \mathcal{A}$.
\end{thm}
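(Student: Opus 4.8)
The plan is to combine the compactness statement of Proposition~\ref{subseq} with a continuity argument for the eigenvalues, and then use the direct method of the calculus of variations. First I would establish continuity of $\lambda_1$ and $\lambda_2$ with respect to pointwise convergence of $V_1$ within $\mathcal{A}$. Fix a sequence $V_1^{(n)} \in \mathcal{A}$ with $V_1^{(n)} \to V_1^\star$ a.e., and set $V^{(n)} = V_0 + V_1^{(n)}$, $V^\star = V_0 + V_1^\star$. Since $0 \le V_1^{(n)} \le M$ uniformly, the quadratic forms $q_n[u] = \int_0^\pi \big(p |u'|^2 + V^{(n)} |u|^2\big)\,dx$ are all defined on the same form domain (determined by $V_0$ and the boundary conditions, which by hypothesis do not change), and $|q_n[u] - q_\star[u]| \le \int_0^\pi |V_1^{(n)} - V_1^\star|\,|u|^2\,dx$. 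For any fixed $u$ in the form domain, $|u|^2 \in L^1$, so dominated convergence (dominating function $M|u|^2$) gives $q_n[u] \to q_\star[u]$. Combined with the uniform bound $|q_n[u] - q_\star[u]| \le M\|u\|_{L^2}^2$, this is enough to pass to the limit in the min-max (Courant--Fischer) characterization
\[
\lambda_k(V^{(n)}) = \min_{\substack{L \subset \mathcal{Q} \\ \dim L = k}} \max_{0 \ne u \in L} \frac{q_n[u]}{\|u\|_{L^2}^2},
\]
yielding $\limsup_n \lambda_k(V^{(n)}) \le \lambda_k(V^\star)$ by using the optimal subspace for $V^\star$ as a trial subspace, and $\liminf_n \lambda_k(V^{(n)}) \ge \lambda_k(V^\star)$ by a further argument: extract a subsequence along which $\lambda_k(V^{(n)})$ converges and the first $k$ normalized eigenfunctions converge weakly in the form domain (using the uniform form bounds and compactness of the form domain in $L^2$), then check the weak limits span a $k$-dimensional space on which $q_\star$ has the right bound. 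Hence $\lambda_k(V^{(n)}) \to \lambda_k(V^\star)$, and in particular $\Gamma$ is continuous on $\mathcal{A}$ for the topology of pointwise a.e.\ convergence.

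Next I would run the direct method. Let $\gamma := \inf\{\Gamma(p, V_0 + V_1) : V_1 \in \mathcal{A}\}$, which is finite and nonnegative since $\Gamma > 0$ always and $\Gamma$ is bounded above along, e.g., $V_1 \equiv 0$. Choose a minimizing sequence $V_1^{(n)} \in \mathcal{A}$ with $\Gamma(p, V_0 + V_1^{(n)}) \to \gamma$. By Proposition~\ref{subseq}, pass to a subsequence so that $V_1^{(n)} \to V_{1,\text{min}} \in \mathcal{A}$ a.e.\ (the proposition gives the a.e.\ limit, and membership in $\mathcal{A} = \mathcal{S}_M$ or $\mathcal{C}_M$ is part of its conclusion). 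By the continuity just established, $\Gamma(p, V_0 + V_{1,\text{min}}) = \lim_n \Gamma(p, V_0 + V_1^{(n)}) = \gamma$, so $V_{\text{min},*} := V_0 + V_{1,\text{min}}$ is the desired minimizer.

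The main obstacle is the $\liminf$ direction of the eigenvalue continuity, i.e.\ lower semicontinuity of $\lambda_k$ under merely pointwise a.e.\ convergence of $V_1$: the upper bound side is the soft direction of min-max, but controlling the limit from below requires extracting convergent (sub)sequences of eigenfunctions and verifying that no spectral mass escapes. The uniform bound $0 \le V_1^{(n)} \le M$ is what rescues this — it makes the form norms uniformly equivalent, so bounded sequences of eigenfunctions are bounded in the form domain, hence precompact in $L^2$, and Fatou's lemma (applied to $\int V_1^{(n)} |u_n|^2$ along the a.e.-convergent subsequences, after also arranging a.e.\ convergence of a subsequence of the $u_n$) closes the gap. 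A secondary technical point, already flagged in the statement, is that $V_1 \in \mathcal{A}$ bounded guarantees the limit-point/limit-circle classification and the self-adjoint realization depend only on $V_0$, so the form domain $\mathcal{Q}$ is genuinely fixed along the whole sequence; this is what makes the min-max comparison legitimate and should be stated explicitly before the continuity argument.
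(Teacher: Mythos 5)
Your proposal is correct and takes essentially the same route as the paper: a minimizing sequence, compactness of $\mathcal{A}$ from Proposition \ref{subseq}, continuity of $\lambda_1,\lambda_2$ under pointwise (hence, by dominated convergence, $L^p$) convergence of the potentials, and the direct method. The paper simply delegates the continuity step to Theorem II.1 of \cite{AsHaSv}, filling in details (the easy min-max direction plus uniform bounds on eigenfunctions for the $\liminf$ direction) in its supplementary remarks, which is precisely the two-sided min-max argument you spell out.
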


\begin{rems}
\item
1.  This can be regarded as a variant of Theorem II.1 of \cite{AsHaSv}, where
in addition we control for the single-well or convex assumption.  We caution that the
optimizers are not claimed to be unique.
\item
2.  We note that, likewise, a minimizing potential exists for
the eigenvalue gap for \eqref{eP} with $V = V_0 - V_1$.  Moreover, 
potentials $V_{\text{max},*,\pm} \in \mathcal{A}$ exist that maximize $\Gamma(p,V)$ for
$V = V_0 \pm V_1$.
\end{rems}

\begin{proof}

Let $(V_{n},a_{n})$ be a minimizing sequence for
the functional
\[
\Gamma (V)=\lambda_{2}(V)-\lambda_{1}(V).
\]
We know by 
Proposition \ref{subseq} that by passing to a subsequence, 
 $V_{n_{l}}(x) \to V_{\star}(x)$
 for all $x$, with $ V_{\star}(x) \in \mathcal{A}$.
 Because $\mathcal{A} \subset L^p[0, \pi]$ for each $p$, $1 \le p \le \infty$, 
 and $L^p$ convergence follows by the Lebesgue Dominated 
Convergence Theorem (for
 any given $p < \infty$),
 we can now follow the proof of of Theorem II.1 of \cite{AsHaSv} to
 establish continuity of $\Gamma$ and finish the claim.
\end{proof}

Once existence has been established,
one can characterize the optimizers in some of the listed cases by a variational analysis, using the Feynman-Hellman formula.
(See \eqref{FH}, below.)

\section{Characterization of optimizers}

In this section we characterize the optimizers of some of the problems in Proposition \ref{subseq} by introducing a set of 
perturbations $P(x)$ which would lead to a contradiciton to $\Gamma'(0) = 0$ unless $V_\star$ has special properties.

\subsection{The class of single-well potenials}
\begin{theorem}\label{StepFnMinimizesSM}

For any $M > 0$, the potential $V_1 \in \mathcal{S}_{M}$ that minimizes
$\Gamma(V_0+V_1)$, with $V_0$ as described in the Introduction, is
$M$ times the indicator function of a strict subinterval interval containing either $0$ or $\pi$.
The minimal gap $\Gamma_\star(M)$ is a decreasing function of $M$, and in the 
classical case where $p=1$,  $V_0 = 0$, and Dirichlet conditions are imposed at $0$ and $\pi$,
we have the following characterization of the gap minimizers.
\begin{enumerate}
\item
The potential energy functions that minimize the gap in the category $\mathcal{S}_{M}$ are of the form
$V_\star(x, M) := M \chi_{x_-(M)}(x)$ and 
$V_\star(\pi-x, M)$, for a function $x_-(M) < \frac \pi 2$ uniquely defined by an
explicit system of equations.
\item
For the gap-minimizing operators, 
\[
M+1 < \lambda_2 < M+4, \quad \text{and} \quad  M-2 < \lambda_1 < M+1,
\]
and for sufficiently large $M$, $\lambda_1 < M$.
(Again, the exact values of $\lambda_{1,2}$ are determined by explicit transcental equations.)
\item
If $M > \frac 7 2$, $\frac{\pi}{2 \sqrt{M}} \le x_-(M) \le \frac{\pi}{\sqrt{M-2}}$.
\item
$\lim_{M \to \infty}\Gamma_\star(M) = \left(\frac{\theta}{\pi}\right)^2 \doteq 2.04575$,
where $\theta$ is the first positive solution of $\theta = \tan \theta$.  There is no single-well potential such that this
infimum is attained.
\end{enumerate}
\end{theorem}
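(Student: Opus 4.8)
The plan is to combine the existence result (Theorem~\ref{exist}) with a Feynman--Hellmann variational argument restricted to perturbations that preserve membership in $\mathcal{S}_M$, and then to carry out the concrete analysis for the classical case $p=1$, $V_0=0$, Dirichlet. The first task is to show that the minimizer $V_1$ must be $M$ times an indicator of an interval touching an endpoint. Given any $V_\star$ that is single-well with transition point $a$, one considers one-parameter families $V_\star + tP$ with $P$ supported where the constraint is slack; differentiating the gap gives
\[
\Gamma'(0) = \int_0^\pi \left( u_2^2 - u_1^2 \right) P\, dx,
\]
where $u_1,u_2$ are the normalized ground and first excited states. To be a minimizer, $\Gamma'(0)$ must be $\ge 0$ for every admissible one-sided perturbation. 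The key structural fact is that $u_2^2 - u_1^2$ changes sign exactly once on $(0,\pi)$ (this follows from Sturm oscillation: $u_1>0$, $u_2$ has one interior zero, and a comparison of the two eigenfunctions), say at a point $c$. One then argues that wherever $0 < V_\star < M$ the perturbation can be taken of either sign, forcing $u_2^2 - u_1^2 \equiv 0$ there, which is impossible on a set of positive measure; hence $V_\star \in \{0, M\}$ a.e. Combined with the single-well shape, $V_\star$ is $M\chi_I$ for an interval $I$, and the sign-change structure of $u_2^2-u_1^2$ together with the requirement that raising the well where $u_2^2>u_1^2$ (resp. lowering where $u_1^2>u_2^2$) does not decrease $\Gamma$ forces $I$ to be an interval containing an endpoint rather than a centered bump. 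That $\Gamma_\star(M)$ is decreasing in $M$ is a monotonicity/domain-monotonicity observation: enlarging the admissible set $\mathcal{S}_M \subset \mathcal{S}_{M'}$ for $M<M'$ can only lower the infimum, and strictness comes from the explicit family.

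For the classical case, substituting $V_\star = M\chi_{[0,x_-]}$ reduces the eigenvalue problem to a piecewise-constant ODE: on $[0,x_-]$ solutions are combinations of $\sin(\sqrt{\lambda-M}\,x)$, $\cos(\sqrt{\lambda-M}\,x)$ (or hyperbolic analogues if $\lambda<M$), and on $[x_-,\pi]$ combinations of $\sin(\sqrt{\lambda}(\pi-x))$, $\cos(\sqrt{\lambda}(\pi - x))$, matched $C^1$ at $x_-$ with Dirichlet data at the ends. This yields a transcendental matching condition $F(\lambda, x_-, M)=0$ for each eigenvalue; item~(1)'s "explicit system of equations" is this pair of conditions for $\lambda_1,\lambda_2$ together with the stationarity condition $u_2(x_-)^2 = u_1(x_-)^2$ coming from $\Gamma'(0)=0$ for the perturbation that moves the edge $x_-$. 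One checks this system has a unique solution with $x_- < \pi/2$; the reflection $x\mapsto \pi - x$ gives the other family. Items~(2) and~(3) are then quantitative: the bounds $M+1<\lambda_2<M+4$ come from sandwiching the operator between the constant potential $M$ (giving $\lambda_2 = M+4$ with Dirichlet eigenvalue $n^2$) and the zero potential plus monotonicity, while $M-2<\lambda_1<M+1$ and $\lambda_1<M$ for large $M$ follow by test-function estimates localizing the ground state in $[0,x_-]$; the width bounds $\frac{\pi}{2\sqrt M}\le x_-(M)\le \frac{\pi}{\sqrt{M-2}}$ drop out of the transcendental equations once $\lambda_1<M<\lambda_2$ is known, since then the solution is hyperbolic on $[0,x_-]$ and trigonometric on $[x_-,\pi]$ and the matching pins $x_-$ to order $M^{-1/2}$.

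Finally, for item~(4) one takes $M\to\infty$. As $M\to\infty$, $x_-(M)\to 0$ like $M^{-1/2}$, so the well shrinks to a point; but the $C^1$ matching at $x_-$ leaves a nontrivial residual boundary condition in the limit. Rescaling near $0$ and passing to the limit, the problem on $[0,\pi]$ becomes $-u'' = \lambda u$ on $(0,\pi)$ with $u(\pi)=0$ and a \emph{Robin-type} condition at $0$ inherited from the collapsing well --- concretely the limiting condition forces the first excited state's eigenvalue to satisfy $\sqrt\lambda\,\pi = \tan(\sqrt\lambda\,\pi)$, i.e. $\theta = \tan\theta$ with $\theta = \sqrt\lambda\,\pi$, while $\lambda_1 \to$ (something that must be subtracted off using $\lambda_1 - M \to$ a finite negative value, so that $\lambda_1 \sim M$ and $\Gamma_\star(M) = \lambda_2 - \lambda_1 \to (\theta/\pi)^2$). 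Carefully: one shows $\lambda_2(M)$ stays bounded and converges to $(\theta/\pi)^2$, whereas $\lambda_1(M) \to 0$ is \emph{not} what happens --- rather both grow, but their difference converges; the cleanest route is to show $\Gamma_\star(M)$ is decreasing and bounded below, compute the limit of the transcendental system directly, and identify it as $\theta=\tan\theta$. Non-attainment follows because any genuine single-well $V_1\in\mathcal{S}_M$ for finite $M$ gives $\Gamma > \Gamma_\star(M) > (\theta/\pi)^2$ strictly, and letting $M\to\infty$ exits every $\mathcal{S}_M$. The main obstacle I anticipate is the degenerate limit in item~(4): justifying the convergence of eigenvalues when the potential becomes an unbounded spike on a vanishing interval, and correctly identifying the limiting boundary condition, requires care --- a naive limit loses the spike entirely and gives the wrong answer ($\lambda_2 = 4$), so one must track the phase accumulated across $[0,x_-(M)]$, which tends to a nonzero constant and survives in the limit as the $\theta=\tan\theta$ condition.
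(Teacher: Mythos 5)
Your overall skeleton (existence by compactness, Feynman--Hellmann first-order conditions with constraint-preserving perturbations, explicit transcendental analysis of the step potential, then $M\to\infty$ asymptotics) is the same as the paper's, but two structural claims on which your first-order argument rests are wrong. First, $u_2^2-u_1^2$ does \emph{not} change sign exactly once: the paper's Lemma (an extension of Ashbaugh--Benguria) shows it has at most two interior zeros, and the sign pattern is positive on $(0,x_-)\cup(x_+,\pi)$ and negative on the middle interval $(x_-,x_+)$ containing the node $x_0$ of $u_2$. This is the crux of the problem: the unconstrained first-order condition would want $V$ large on the middle and small near \emph{both} ends --- a bump, which is not single-well --- and the whole characterization consists in resolving that conflict by a case analysis on the location of the transition point $a$ relative to $x_\pm$, using one-sided flattening perturbations that remain in $\mathcal{S}_M$. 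Second, your claim that wherever $0<V_\star<M$ one may perturb with either sign (hence $u_2^2-u_1^2\equiv 0$ there) is false under the constraint: adding a signed bump on a monotone stretch destroys the single-well property, so only monotone/flattening perturbations of the kind the paper constructs are admissible, and your route to ``$V_\star\in\{0,M\}$ a.e.'' does not go through.

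The explicit part is also inconsistent about which configuration minimizes. The minimizer (normalized so the potential vanishes near $0$) is $V_\star=M$ on $[x_-,\pi]$ with $x_-\sim \pi/(2\sqrt M)$ --- a thin notch of zero potential at an endpoint --- not $M\chi_{[0,x_-]}$, which is a thin barrier whose gap tends to $3$. With your orientation the assertions $\lambda_1\sim M$ and $\lambda_2>M+1$ cannot hold, and ``sandwiching between $0$ and $M$'' only gives $4\le\lambda_2\le M+4$; the lower bound $\lambda_2>M+1$ comes instead from the fact that $x_0\in\mathrm{supp}\,V_\star$, where $u_2$ is proportional to $\sin\left(\sqrt{\lambda_2-M}\,(\pi-x)\right)$ and so can vanish only if $\lambda_2-M>1$. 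Finally, for item (4) the decisive step is an optimization you omit: after rescaling ($r^2=\lambda_2-M$, $s^2=M-\lambda_1$, $y=\sqrt M\,x_-=\tfrac\pi2+y_1\mu+\cdots$, $\mu=M^{-1/2}$), the gap $r^2+s^2$ must be minimized over $y_1$ subject to $\tan(\pi r)=r/y_1$ and $\tanh(\pi s)=s/y_1$, and one must show the minimum occurs at $y_1=1/\pi$, $s=0$, which is exactly where $\tan(\pi r)=\pi r$, i.e. $\theta=\tan\theta$, appears; your Robin-limit heuristic even misstates the condition (it involves $\sqrt{\lambda_2-M}\,\pi$, not $\sqrt{\lambda_2}\,\pi$), and ``compute the limit of the transcendental system directly'' neither produces the constant $(\theta/\pi)^2$ nor the non-attainment statement.
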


For $V \in \mathcal{S}_{M}$ let
$H(p,V)=-\frac{d}{dx}p(x)\frac{d}{dx}+V(x)$, for which
$$\sigma(H(p,V))=\left\lbrace \lambda_{1}< \lambda_{2}<\cdots \right\rbrace,$$
with corresponding normalized $u_{1},u_{2},\dots$, $u_{1}>0$ on $[0,\pi]$.
We may choose a sign for $u_2$ such that
for some $x_{0}$,
\[
\begin{array}{lr}
   u_{2}(x)>0 &\mbox{on}\quad (0,x_{0}),\\
   u_{2}(x)<0 &\mbox{on}\quad (x_{0},\pi).
\end{array}
\]

We will make heavy use of first-order perturbation theory to
characterize the effect on the eigenvalues of a small
{change in potential energy $V(x)$. Thus,} let $V\in \mathcal{S}_{M}$, and $V(x,\kappa)$ be a one-parameter $\frac{\partial V(x,\kappa)}{\partial \kappa}$ family of functions in $\mathcal{S}_{M}$ such
that $\frac{\partial V(x,\kappa)}{\partial \kappa}$ exists as a bounded, measurable function.
Let $\lambda_{n}(\kappa)$ denote
the n-th eigenvalue of the Schr\"odinger operator with potential $V(x,\kappa)$. If $\lambda_{n}(\kappa)$ is a simple
eigenvalue , then the standard ``Feynman-Hellman formula'' of perturbation theory states that
\begin{equation}\label{FH}
\frac{d\lambda_{n}(\kappa)}{d\kappa}=
\int_{0}^{\pi}\frac{\partial V(x,\kappa)}{\partial \kappa}u_{n}^{2}(x,\kappa)dx.
\end{equation}
(See, for instance, \cite{Kato}, \S II.2.)
We are guaranteed that $\lambda_{1}$ is simple, but this is not 
necessarily true of $\lambda_{2}$ and
hence also not automatic for
$\Gamma=\lambda_{2}-\lambda_{1} $. Nonetheless, according to degenerate perturbation theory, 
cf. \cite{Kato} \S VII.6, if $\lambda_{2}$   is $\mathit{l}$-fold degenerate 
at $\kappa=\kappa_0$, then there is a relabeling of the eigenvalues  $\lambda_{2}\ldots \lambda_{2+l-1}$ and of the basis of eigenvectors, so that $\hat{\lambda}_{2}\ldots \hat{\lambda}_{2+l-1}$ are analytic functions in a neighborhood of $t_0$, and
\eqref{FH} remains valid
with the specified basis of eigenvectors. (In the situation of this article, $l\leq 2$ and we shall always have
$\kappa_0=0$.) We note for future purposes that $\lambda_{2}=\min (\hat{\lambda}_{2},\hat{\lambda}_{3})$.

The following lemma is a
straightforward extension of a result in \cite{AsBe}, which was central to \cite{Lav}.

\begin{lemma}\label{lemma2.1}
The equation $u_{2}^{2}(x)-u_{1}^{2}(x)=0$ has at most two solutions
in $(0,\pi)$.
\end{lemma}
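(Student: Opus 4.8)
The plan is to study the difference $w(x) := u_2^2(x) - u_1^2(x)$ by producing a second-order differential equation that $w$ (or a related quantity) satisfies, and then to count sign changes via a Sturm-type or Wronskian argument. First I would set $v(x) := u_1(x) u_2(x)$ and $w(x) := u_2^2(x) - u_1^2(x)$, and compute derivatives using the eigenvalue equations $-(p u_i')' + V u_i = \lambda_i u_i$. A direct calculation gives $(p w')' = p' w' + (\text{terms})$; more useful is the classical identity that the combination $p(u_1 u_2' - u_2 u_1')$ has derivative $(\lambda_1 - \lambda_2) u_1 u_2 = -\Gamma\, v$, so the Wronskian $W(x) := p(x)\bigl(u_1(x) u_2'(x) - u_2(x) u_1'(x)\bigr)$ satisfies $W'(x) = -\Gamma\, v(x)$. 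Note $w'(x) = 2 u_2 u_2' - 2 u_1 u_1'$, and one can check that $u_1^2 \bigl(u_2/u_1\bigr)' = u_1 u_2' - u_2 u_1' = W/p$, so that $\bigl(u_2/u_1\bigr)' = W/(p u_1^2)$, which has the same sign as $W$.

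The key structural fact is that $w(x) = 0$ iff $|u_2/u_1| = 1$, i.e. $u_2/u_1 = \pm 1$. Since $u_1 > 0$ on $(0,\pi)$, the quotient $q(x) := u_2(x)/u_1(x)$ is a well-defined $C^1$ function on the open interval, and $w = u_1^2(q^2 - 1)$ vanishes exactly where $q(x) = 1$ or $q(x) = -1$. So it suffices to bound the total number of solutions of $q(x) = \pm 1$ by two. Now $q'(x) = W(x)/(p(x) u_1(x)^2)$, and $W'(x) = -\Gamma v(x) = -\Gamma u_1 u_2 = -\Gamma u_1^2 q$. Hence on any interval where $q > 0$ we have $W' < 0$ (so $W$ is strictly decreasing there, hence $q$ is concave there in the appropriate sense — more precisely $q'$ is decreasing where $W/(pu_1^2)$... here one must be slightly careful, so I would instead argue directly with $W$): $W$ is strictly monotone decreasing on $\{q > 0\}$ and strictly monotone increasing on $\{q < 0\}$. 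Since the sign of $q'$ equals the sign of $W$, between any two consecutive zeros of $q' $ (equivalently of $W$) the function $q$ is strictly monotone. The main point: $W$ has at most one zero on the set $\{q > 0\}$ and at most one on $\{q < 0\}$ — but actually one shows $W$ has at most one zero in all of $(0,\pi)$ using that $q$ has exactly one sign change (at the single interior node $x_0$ of $u_2$). Combining, $q$ is strictly monotone on $(0, x_0)$ and on $(x_0, \pi)$ (or changes monotonicity at most once in total), so $q$ takes each value, in particular $1$ and $-1$, a total of at most twice.

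The cleanest route, and the one I would write up: on $(0, x_0)$ we have $u_2 > 0$ so $q > 0$; there $W' = -\Gamma u_1^2 q < 0$, so $W$ is strictly decreasing, hence $q' = W/(pu_1^2)$ changes sign at most once, so $q$ is strictly increasing then strictly decreasing (or purely monotone) on $(0,x_0)$ — but since $q \to 0^+$ is not forced at the endpoint, I instead note $q$ can equal the value $1$ at most twice on $(0,x_0)$ only if it is non-monotone; and $q < 0$ can't happen there so $q = -1$ has no solution in $(0,x_0)$. Symmetrically on $(x_0,\pi)$, $q < 0$, so $q = 1$ has no solution and $q = -1$ has at most two. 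This already gives at most four, which is too weak, so the real work is to rule out the non-monotone case on one of the two subintervals using boundary behavior of $W$ (e.g. $W(0) $ and $W(\pi)$ have controlled signs from the boundary conditions, and $W$ is monotone on each piece) — this boundary-data bookkeeping is \textbf{the main obstacle} and is exactly where the argument of \cite{AsBe}, adapted to general $p$ and to the separated boundary conditions \eqref{bc}, is invoked. I expect that once $W(0), W(\pi)$ are pinned down by \eqref{bc} and the normalization of $u_1, u_2$, the monotonicity of $W$ on each of $(0,x_0)$, $(x_0,\pi)$ forces $q$ to be monotone on each piece, yielding at most one solution of $w = 0$ per piece and hence at most two in total.
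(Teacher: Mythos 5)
Your setup (the quotient $q=u_2/u_1$, the modified Wronskian $W=p(u_1u_2'-u_2u_1')$ with $W'=(\lambda_1-\lambda_2)u_1u_2$, and the identity $q'=W/(pu_1^2)$) is exactly the right machinery, and it is the same machinery the paper uses. But as written the argument has a genuine gap at precisely the step you flag as ``the main obstacle'': you never evaluate $W$ at the endpoints, and without that you only obtain ``$q=1$ at most twice on $(0,x_0)$ and $q=-1$ at most twice on $(x_0,\pi)$,'' i.e.\ the bound four, which you concede is too weak. Your suggested repair --- that mere monotonicity of $W$ on each side of $x_0$ ``forces $q$ to be monotone on each piece'' --- is not correct as stated: a monotone $W$ with the wrong endpoint value could still change sign once in $(0,x_0)$, making $q$ increase and then decrease and allowing two crossings of the level $1$ there. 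What is missing is the one-line observation that the separated boundary conditions \eqref{bc} force $W(0)=0$ and $W(\pi)=0$: at $x=0$ both vectors $\bigl(u_i(0),(pu_i')(0)\bigr)$ are proportional to $(\sin\alpha,\cos\alpha)$, so the determinant $u_1(0)(pu_2')(0)-u_2(0)(pu_1')(0)$ vanishes (trivially so for Dirichlet), and likewise at $\pi$. Then
\[
W(x)=\int_0^x(\lambda_1-\lambda_2)\,u_1(s)u_2(s)\,ds
\]
has a strict sign on $(0,x_0]$ because $u_1u_2>0$ there, so $q'=W/(pu_1^2)$ never vanishes on $(0,x_0)$; hence $q$ is strictly monotone there, the equation $q=1$ has at most one solution on $(0,x_0)$, and $q=-1$ has none since $q>0$. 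Reflecting $x\leftrightarrow\pi-x$ (equivalently using $W(\pi)=0$) gives at most one solution of $q=-1$ on $(x_0,\pi)$ and none of $q=1$, for a total of at most two.

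This endpoint evaluation is not outsourced to \cite{AsBe} in the paper; it is the heart of the paper's own proof, which phrases the conclusion via Rolle's theorem: two solutions of $|u_2|=|u_1|$ in $(0,x_0)$ would produce a point where $(u_2/u_1)'=0$, contradicting the displayed integral formula for $(u_2/u_1)'$, whose integrand $(\lambda_1-\lambda_2)u_1u_2$ is strictly negative on $(0,x_0)$. So your proposal is on the right track and the fix is short, but without establishing $W(0)=W(\pi)=0$ and the resulting definite sign (not just monotonicity) of $W$ on each side of the node, the lemma is not proved.
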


\begin{proof}
Let $x_{0}$ be the
unique zero  of $u_{2}=0$ in $(0,\pi)$.

Suppose that there
exist $\alpha_{1},\alpha_{2} \in (0,x)$ such that : $$\left\vert u_{2}(\alpha_{i})\right\vert =\left\vert u_1(\alpha_{i})\right\vert ;\quad i=1,2.$$
Define $v(x)=\frac{u_{2}(x)}{u_{1}(x)}$, so $v(\alpha_{1})=v(\alpha_{2})=1$.
By Rolle's Theorem, there exists $\xi \in (\alpha_{1},\alpha_{2})\subset (0,x_{0})$ such that 
\begin{equation}\label{Rolle}
v'(\xi)=0.
\end{equation}
Defining the Wronskian  $W(x)=u_1(x)p(x)u'_2(x)-u_2(x)p(x)u'_1(x)$,
we get
\[
W'(x)=( \lambda_{2}-\lambda_{1}) u_{1}(x)u_{2}(x).
\]
On the other hand, for all $x\in (0,x_{0})$,
\begin{align*}
\left( \frac{u_{2}(x)}{u_{1}(x)}\right)'&=-\frac{W(x)}{p(x)u_{1}^{2}(x)}\nonumber \\
&= \frac{1}{p(x) u_{1}^{2}(x)}\int_{0}^{x}\left( \lambda_{1}-\lambda_{2}\right)u_{1}(s)u_{2}(s)ds <0, \nonumber
\end{align*}
which contradicts \eqref{Rolle}.
Hence there is at most one
solution
of the equation $\left\vert u_{2}(x)\right\vert=\left\vert  u_{1}(x)\right\vert$ in $(0,x_{0})$,
which implies that there is at most one zero of the equation $$u_{2}^{2}(x)-u_{1}^{2}(x)=0$$
for $x\in (0, x_{0})$.  If we reflect the interval so that $x \leftrightarrow \pi-x$ and repeat the argument, we can
conclude that there is at most one zero of the equation $\left\vert u_{2}(x)\right\vert=\left\vert  u_{1}(x)\right\vert$ in $[x_{0},\pi)$.
Consequently, there are at most two zeroes of the equation
\[u_{2}^{2}(x)-u_{1}^{2}(x)=0
\]
in $(0, \pi)$.
\end{proof}

By Lemma \ref{lemma2.1}, there 
exist $x_{\pm}: 0\leq x_{-}< x_{0}< x_{+}\leq \pi$, for which
\begin{equation}\label{eq2.1}
 u_{2}^{2}(x)-u_{1}^{2}(x) =
     \begin{cases}
        >0 & x\in (0,x_{-})\cup (x_{+},\pi) \\
        <0  & x\in (x_{-},x_{+}).
     \end{cases}
\end{equation}

\noindent
\textbf{Proof of Theorem \ref{StepFnMinimizesSM}.}
It has been established by compactness that $\mathcal{S}_{M}$ contains a function 
$V_\star$ such that
$V = V_0 + V_\star$ minimizes $\Gamma(V)$.

Let $u_{1}$ and $u_{2}$ be the first and second normalized eigenfunctions of the problem (\ref{eP}), respectively.  By
Lemma \ref{lemma2.1}, there exists $0\leq x_{-}< x_{+}\leq \pi$ satisfying (\ref{eq2.1}) corresponding to
$V = V_0 + V_{\star}$.

We consider first
the case: $x_{-}<a<x_{+}$.
For suitable perturbations $P$ we define $$V_{\star \kappa}(x)=(1-\kappa)V_{\star}+\kappa P(x).$$

In the formula
$$\Gamma^{'}(k)=\displaystyle\int_{0}^{\pi}\left( u_{2}^{2}(x)-u_{1}^{2}(x)\right)\left( P(x)-V_{\star}\right)dx$$
we choose
\begin{equation}
P(x)=
\begin{cases}
   V_{\star}(x) & \text{on}\quad [x_{-},x_{+}]^{c}  \\
   \max\left( V_{\star}(x_{-}),V_{\star}(x_{+})\right) &\text{on}\quad [x_{-},x_{+}].
\end{cases}
\end{equation}
It can be seen that the functions $V_{\star \kappa}(x) \in \mathcal{S}_{M}$ for $0 \le \kappa \le 1$, and that 
$P(x) - V_{\star}(x) \ge 0$ and is supported in $[x_-, x_+]$ (if not identically $0$).

If $V_{\star}$ is not constant on $[x_{-},x_{+}]$, then this implies that $\Gamma^{'}(0) < 0$, which contradicts the minimality of 
$V_{\star}$.
We conclude that $V_{\star}=\textrm{cst}$ a.e.\ in $(x_{-},x_{+})$.

Next for $0\leq x\leq x_{-}$, we choose $P(x)=V_{\star}(x_{-})$,
and otherwise set $P(x)=V_{\star}(x)$, with which
\begin{align*}
\Gamma^{'}(k)&=\int_{0}^{\pi}\left( u_{2}^{2}(x)-u_{1}^{2}(x)\right)\left( P(x)-V_{\star}(x)\right)dx \nonumber \\
&= \int_{0}^{x_{-}}\left( u_{2}^{2}(x)-u_{1}^{2}(x)\right)\left( V_{\star}(x_{-})-V_{\star}(x)\right)dx \leq 0, \nonumber
\end{align*}
and $\Gamma^{'}(k)<0$ unless $V(x)$ is constant on $(x_{-},x_{+})$.

For $[x_{+},\pi]$ the same argument applies.

The second possibility is that $a<x_{-}$.  We let 
\begin{equation}
P(x)=
\begin{cases}
   V_{\star}(a) & \text{if}\quad 0\leq x<a  \\
   V_{\star}(x) &\text{otherwise},
\end{cases}
\end{equation}
so that
\begin{align*}
\Gamma^{'}(k)&=\int_{0}^{\pi}\left( u_{2}^{2}(x)-u_{1}^{2}(x)\right)\left( P(x)-V_{\star}(x)\right)dx \nonumber \\
&=\int_{0}^{a}\left( u_{2}^{2}(x)-u_{1}^{2}(x)\right)\left( V_{\star}(a)-V_{\star}(x)\right)dx \leq 0, \nonumber
\end{align*}
and $\Gamma^{'}(k)<0$ unless $V(x)$ is constant on $(0,x_{-})$.
Alternatively, let
\begin{equation}
P(x)=
\begin{cases}
   V_{\star}(x) & \text{if}\quad  x<x_{+}  \\
   V_{\star}(x_{+}) &\text{otherwise},
\end{cases}
\end{equation}
with which
\begin{align*}
\Gamma^{'}(k)&=\int_{0}^{\pi}\left( u_{2}^{2}(x)-u_{1}^{2}(x)\right)\left( P(x)-V_{\star}(x)\right)dx \nonumber \\
&=\int_{x_{+}}^{\pi}\left( u_{2}^{2}(x)-u_{1}^{2}(x)\right)\left( V_{\star}(x_{+})-V_{\star}(x)\right)dx \leq 0. \nonumber
\end{align*}
Thus $\Gamma'(k)<0$ unless $V(x)=cst$ on $(x_{+}, \pi)$.

The statement that the minimal gap decreases monotonically with respect to $M$ is elementary, since the set over which the minimum is sought is larger for larger $M$.

We turn now to the characterization of the minimizers
under the assumptions that $p = 1$, $V_0 = 0$ and Dirichlet boundary conditions are imposed at $0$ and $\pi$.
The first item is what has been proved above, where we assumed without loss of generality, that the step function
equals $0$ on 
the interval $[0, x_-)$.  (The other possibility is covered by $x \leftrightarrow \pi - x$,  $x_-  \leftrightarrow \pi-x_+ $.)

In the next claim the upper bounds on $\lambda_{1,2}$ are immediate from the fact that the first and second eigenvalues with $V$ set to $0$ are $1$ and $4$,
and that 
$V_\star \le M$ a.e.  To establish that $\lambda_2 > M+1$, recall that, according to the argument 
given above, the support of the minimizing step function for $V \in \mathcal{S}_{M}$ contains
$x_0$, the unique interior zero of $u_2$.
Now, on the support of $V_\star$ the eigenfunction $u_2$ is a multiple of $\sin(\sqrt{\lambda_2 - M} (\pi - x))$, which cannot have a zero in $\textrm{supp}(V_\star)$
unless $\lambda_2 - M > 1$.
The estimate $\lambda_1 > M-2$ is a consequence of the fact that that the minimal value of
$\Gamma$ is less than the gap for the case $V\equiv0$, which equals 3.

For the upper bound on $x_-$, we use the Rayleigh-Ritz inequality to estimate $\lambda_1$, choosing for the test function
a normalization constant times $\chi_{[0, x_-]} \sin\left(\frac{\pi x}{x_-}\right)$.  A calculation of the Rayleigh quotient for this function yields that
\[
\lambda_1 \le \left(\frac{\pi}{x_-}\right)^2.
\]
When combined with the inequality $\lambda_1 > M-2$, the claimed upper bound on $x_-$ follows.  (It requires only 
$M > 2$ rather than $M > \frac 7 2$, to be used later.)

The lower bound on $x_-$ and the final estimate require a fine analysis of the transcendental equations solved by the 
eigenvalues.
On the interval $[0, x_-)$, the eigenfunctions that satisfy Dirichlet conditions are multiples of $\sin(\sqrt{\lambda}x)$ whereas 
on $(x_-, \pi]$ they are multiples of $\sin(\sqrt{\lambda-M}(\pi -x))$ (assuming for now that $\lambda > M$).  Since the eigenfunctions must be $C^1$, by equating their logarithmic derivatives 
at $x_-$, the eigenvalues are determined by the transcendental equation
\begin{equation}\label{transeq}
\sqrt{\lambda} \cot\left(\sqrt{\lambda} x_-\right) = - \sqrt{\lambda - M} \cot\left(\sqrt{\lambda - M} (\pi -x_-)\right).
\end{equation}
(This holds for any step-function potential, regardless of whether $x_-$ satisfies the condition to minimize the gap, $
\left(u_2(x_-)\right)^2 - \left(u_1(x_-)\right)^2 = 0$.)
If $\lambda < M$, then the same argument leads to
\[
\sqrt{\lambda} \cot\left(\sqrt{\lambda} x_-\right) = - \sqrt{M - \lambda} \coth\left(\sqrt{M - \lambda} (\pi -x_-)\right).
\]

There remains the possibility that $\lambda_1 = M$, and indeed an eigenfunction with this eigenvalue is possible if
the eigenfunction is 
a multiple of $\sin(\sqrt{M} x$) for $x \le x_-$ and linear on $[x_-, \pi]$.  The condition for this function to be $C^1$ is
that $x_-$ have the value for which $\sqrt{M} \cot(\sqrt{M} x_-) = - \frac{\cos(\sqrt{M} x_-)}{\pi - x_-}$, or, after simplification,
 $\sqrt{M} (\pi - x_-) = - \sin(\sqrt{M} x_-)$.  For large $M$, however, an eigenfunction of this type
 is in contradiction with the upper bound on 
$x_-$.  (A calculation using Mathematica shows that $M > \frac 7 2$
more than suffices to eliminate the possibility of eigenfunctions of this type.)

Since the transcendental equations
for $\lambda$ contain a large parameter $M$, we find it convenient to
rescale them.
With the substitutions
\begin{align*}
r^2 &:= \lambda - M\\
y &:= \sqrt{M} x_-\\
\mu &:= M^{-\frac 1 2}
\end{align*}
and some simple algebra,
\eqref{transeq} takes on the form
\begin{equation}\label{resctranseq}
\tan\left(r (\pi - \mu y)\right) + 
\mu \left(r/\sqrt{1 + \mu^2 r^2}\right) \tan\left(\sqrt{1 + \mu^2 r^2} y \right) = 0,
\end{equation}
provided that $\lambda > M \Leftrightarrow r > 0$.
In the case that $\lambda_1 < M$, a similar calculation with $s^2 := M - \lambda$ leads to
\begin{equation}\label{resctranseqLESS}
\tanh\left(s (\pi - \mu y)\right) + 
\mu \left(s/\sqrt{1 - \mu^2 s^2}\right) \tan\left(\sqrt{1 - \mu^2 s^2} y \right) = 0.
\end{equation}

 We continue now with a narrow examination of the pair of transcendental equations \eqref{resctranseq} and
 \eqref{resctranseqLESS} in the limit of small $\mu$.  For fixed $\mu$ and $y$, $\tan\left(r (\pi - \mu y)\right)$ increases monotonically between a succession of vertical asymptotes,
while the second term in \eqref{resctranseq} is continuous and of small magnitude.  It is easy to see
by considering the vertical asymptotes of the first function in \eqref{resctranseq}
that solutions for $r$
occur near the positive integers.  To have an eigenvalue gap $< 3$ we will need a
solution for $r$ in the interval containing $r=1$ to correspond to a sign-changing eigenfunction, which means that 
the ground-state eigenvalue, which does not change sign,
must correspond to a solution of  \eqref{resctranseqLESS} with $s > 0$, i.e., $\lambda_1 < M$.
A necessary condition for this, needed to make the second term negative, is for $y > \frac \pi 2$, which is equivalent to the
claimed lower bound on $x_-$ in the theorem.  At the same time, we must have $y \le \frac \pi 2  + O(\mu)$ to have a solution, because of the factor $\mu$ in the second term.
To obtain a precise asymptotic estimate, we therefore set $y = \frac \pi 2  + y_1 \mu + O(\mu^2)$
and expand \eqref{resctranseq} and \eqref{resctranseqLESS} with Taylor's formula, keeping only leading terms.  The results are
\begin{align*}
\tan(\pi r)\left(1 + O(\mu)\right)  &= - \frac{\mu r}{\cot\left(\frac \pi 2 + y_1 \mu\right)}\left(1 + O(\mu^2)\right)\\
&= \frac{\mu r}{\tan(y_1 \mu)}\left(1 + O(\mu^2)\right)\\
&= \frac{r}{y_1}\left(1 + O(\mu^2\right),
\end{align*}
and, similarly,
\begin{equation}\nonumber
\tanh(\pi s)\left(1 + O(\mu)\right)  = \frac{s}{y_1}\left(1 + O(\mu^2\right).
\end{equation}

Since our task is to minimize $\Gamma = r^2 + s^2$ as a function of $y$
as $\mu \to 0$, we may neglect
higher-order terms and instead minimize $r^2 + s^2$, defined by the solutions of
\begin{equation}\label{resceq-r}
\tan(\pi r)  = \frac{r}{y_1}
\end{equation}
and
\begin{equation}\label{resceq-s}
\tanh(\pi s)  = \frac{s}{y_1}.
\end{equation}
Eq.~\eqref{resceq-s} has no solutions for $s \ge 0$ unless $y_1 \ge \frac 1 \pi \doteq 0.31831$.  A simple
upper limit on 
$y_1$ is provided by the value corresponding to $s = \frac 3 2$ in  \eqref{resceq-s}, {\em viz}., 
\[
\frac{3}{2 \tanh\left(\frac{3 \pi}{2}\right)} \doteq 1.50024.
\]
By differentiating \eqref{resceq-r} and \eqref{resceq-s}, applying the identities that 
\[
\frac{d \tan(z)}{dz} = 1 + \tan(z)^2, \quad\quad \frac{d \tanh(z)}{dz} = 1 - \tanh(z)^2,
\]
and algebraically arranging terms, we obtain
\[
\frac{d \left(r^2 + s^2\right)}{d y_1} = \frac 2 \pi \left(\frac{s^2}{s^2 - \eta}-\frac{r^2}{r^2 + \eta}\right)
\]
with $\eta := y_1 \left(y_1 - \frac 1 \pi\right) \ge 0$.
Hence a critical point of the function $r^2 + s^2$ must satisfy
\[
\frac{s^2}{s^2 - \eta}-\frac{r^2}{r^2 + \eta} = 0,
\]
which is equivalent to
\begin{equation}\label{cpcond}
\frac{s^2}{r^2} = \frac{s^2 - \eta}{r^2 + \eta}.
\end{equation}
It is evident that a critical point requires $\eta=0  \Leftrightarrow y_1 = \frac 1 \pi$ (which furthermore implies $s=0$).  Since 
a calculation shows that $r(1.50024)^2 + s(1.50024)^2   \doteq 4.8171$, which is larger than 3, and there are no critical points
for smaller values of $y_1 > \frac 1 \pi$, the remaining possibility for the minimal gap is $ y_1 = \frac 1 \pi$.  
Having determined the next-order correction to $y$, we conclude:
\[
V_\star(M) = M \chi_{x_-(M)}(x)
\]
with
\[
x_- = \frac{\pi}{
2 \sqrt{M}} + \frac{1}{\pi M} + \cdots,
\]
and
\[
\Gamma(V_\star(M)) = \left(\frac{\theta}{\pi}\right)^2,
\]
proving the lower bound claimed in the theorem.  We finally note that as $\mu \to 0$ the gap-minimizing 
potential energy does not have a sensible limit.

\subsection{The class of convex potentials}
We show in this section how Lavine's result can be obtained
and extended by our methods.

\begin{prop}
Consider $V = V_0 + V_1$ where $V_1 \in \mathcal{C}$ or $\mathcal{C}_M$.
If $V_1 = V_\star$ minimizes $\Gamma(V)$, then
$V_{\star}$ cannot be strictly convex
on any interval.
\end{prop}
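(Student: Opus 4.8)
The plan is to mimic the variational argument used for the single-well case in Theorem~\ref{StepFnMinimizesSM}, but to replace the ``flattening'' perturbations adapted to monotonicity with perturbations adapted to convexity. Suppose for contradiction that $V_\star$ is strictly convex on some subinterval $[b,c]\subset[0,\pi]$. The key tool is again the Feynman–Hellmann formula in the form
\[
\Gamma'(0)=\int_0^\pi\bigl(u_2^2(x)-u_1^2(x)\bigr)\bigl(P(x)-V_\star(x)\bigr)\,dx,
\]
where $V_{\star\kappa}=(1-\kappa)V_\star+\kappa P$ and $P$ must be chosen so that $V_{\star\kappa}\in\mathcal{C}_M$ (resp.\ $\mathcal{C}$) for $\kappa\in[0,1]$ and so that the integrand has a definite sign, forcing $\Gamma'(0)<0$ and contradicting minimality. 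Recall from Lemma~\ref{lemma2.1} and \eqref{eq2.1} that $u_2^2-u_1^2$ is positive on $(0,x_-)\cup(x_+,\pi)$ and negative on $(x_-,x_+)$, so the sign pattern of the weight is controlled by the three intervals cut out by $x_\pm$.

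First I would observe that the natural convexity-respecting perturbation is a \emph{chord}: if $V_\star$ is strictly convex on $[b,c]$ then replacing $V_\star$ on $[b,c]$ by the affine function $\ell(x)$ interpolating the endpoint values $V_\star(b),V_\star(c)$ yields $\ell(x)\ge V_\star(x)$ on $[b,c]$ with strict inequality on the interior, and the resulting $P$ (equal to $V_\star$ off $[b,c]$) keeps the family convex, since on $[b,c]$ we interpolate between a convex function and one of its chords, and the one-sided derivatives at $b$ and $c$ still fit together correctly with the unchanged pieces. Thus $P-V_\star\ge0$, supported in $[b,c]$, and not identically zero. If $[b,c]$ lies entirely inside one of the three sign-intervals of $u_2^2-u_1^2$, the integrand has one sign and we get $\Gamma'(0)\neq0$: if $[b,c]\subset(x_-,x_+)$ then $\Gamma'(0)<0$ directly, while if $[b,c]\subset(0,x_-)$ or $[b,c]\subset(x_+,\pi)$ we instead use the \emph{anti-chord} perturbation, i.e.\ extend the affine germ of $V_\star$ from outside $[b,c]$ (or simply push $V_\star$ down toward a supporting line), to get $P-V_\star\le0$ there and again $\Gamma'(0)<0$; in both directions we contradict the minimality of $V_\star$.

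The main obstacle is the case where the interval $[b,c]$ of strict convexity straddles one of the transition points $x_-$ or $x_+$, so that $u_2^2-u_1^2$ changes sign inside $[b,c]$ and the chord perturbation above no longer has a definite-sign integrand. The fix I expect to use is to localize: since $V_\star$ is strictly convex at an interior point $x_0$ of $[b,c]$ precisely means it is strictly convex on every sufficiently small neighborhood, and since $x_-<x_+$ are two isolated points, any point of strict convexity has a neighborhood missing at least one of them; but to get a clean contradiction we actually want a subinterval of strict convexity avoiding \emph{both} $x_-$ and $x_+$, which exists unless the only points of strict convexity are clustered exactly at $\{x_-,x_+\}$ --- a measure-zero set, which is incompatible with strict convexity on a genuine interval. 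So after a covering/localization argument one reduces to the three clean cases above. I would carry this out in the order: (1) record the Feynman–Hellmann identity and the sign pattern \eqref{eq2.1}; (2) define the chord and anti-chord perturbations and verify they keep the family in $\mathcal{C}_M$; (3) dispose of the three cases where $[b,c]$ lies in a single sign-interval; (4) handle the straddling case by localizing to a smaller subinterval of strict convexity disjoint from $\{x_-,x_+\}$, which must exist. Step~(4), the localization, is the part requiring the most care, since it must be argued that strict convexity on an interval cannot be confined to an arbitrarily small neighborhood of the two points $x_\pm$.
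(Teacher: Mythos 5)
Your overall strategy is the same as the paper's: apply the Feynman--Hellmann identity \eqref{FH} to a convexity-preserving family $(1-\kappa)V_\star+\kappa P$, use the sign pattern \eqref{eq2.1}, and localize so the weight $u_2^2-u_1^2$ has constant sign. Your chord perturbation is correct and admissible for the case $[b,c]\subset(x_-,x_+)$: the modified function is globally convex (its one-sided derivatives remain nondecreasing across $b$ and $c$ because the chord slope lies between $V_\star'(b^+)$ and $V_\star'(c^-)$) and stays in $[0,M]$. Also, the localization you flag in step (4) as delicate is in fact trivial: strict convexity on $[b,c]$ is inherited by every subinterval, and since $u_2^2-u_1^2$ changes sign only at the two points $x_\pm$, any interval of strict convexity contains a nontrivial subinterval on which the weight has one sign; this is exactly the paper's ``by passing to a subinterval if necessary.''

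The genuine gap is in your ``anti-chord'' step, which is precisely what is needed when the subinterval lies in $(0,x_-)$ or $(x_+,\pi)$. As described, it fails: if you replace $V_\star$ on $[b,c]$ by a supporting (tangent) line at an interior point while keeping $V_\star$ outside, then by strict convexity that line is strictly below $V_\star(b)$ and $V_\star(c)$, so the competitor jumps downward at $b$ and $c$; it is discontinuous, hence not convex, and $(1-\kappa)V_\star+\kappa P\notin\mathcal{C}_M$ for any $\kappa>0$. The alternative reading---pulling the whole potential toward a global supporting line---is not supported in $[b,c]$, so the integrand no longer has a definite sign. The step can be repaired: take $P(x)=\max\{V_\star(b)+V_\star'(b^-)(x-b),\ V_\star(c)+V_\star'(c^+)(x-c),\ 0\}$ on $[b,c]$ and $P=V_\star$ outside. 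This $P$ is continuous at $b,c$, globally convex (slopes still nondecreasing), satisfies $0\le P\le V_\star\le M$, and by strict convexity $P<V_\star$ on all of $(b,c)$ except at most one point, so $\Gamma'(0)<0$ where the weight is positive. The paper avoids endpoint matching altogether by a different mechanism: it perturbs by a small $C^2$ bump supported in the constant-sign subinterval, with sign chosen opposite to $u_2^2-u_1^2$, and uses the strict convexity of $V_\star$ there to keep the perturbed potential convex for small $\kappa$; your piecewise-affine competitors are a reasonable substitute, but the downward case must be constructed as above rather than by splicing in a supporting line.
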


\begin{proof}
Suppose that $V_{\star}$ were
convex on an interval $I$.  By passing to a subinterval if necessary, we can
arrange that
$u_{2}^{2}-u_{1}^{2}$ does not change sign on $I$.  We can then choose $P(x)\in C^{2}$, $\textrm{supp}\, P(x)\subset I$ and $P(x)(u_{2}^{2}-u_{1}^{2})<0$
on $I$.  By  \eqref{FH} $\Gamma^{'}<0$, which is a contradiction.
\end{proof}

\begin{prop}
$V_{\star}=mx+b$.
\end{prop}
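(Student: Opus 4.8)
The plan is to run the first-order perturbation argument of the proof of Theorem~\ref{StepFnMinimizesSM} once more, but with the competitor $P$ chosen \emph{affine}, so that the homotopy $V_{\star\kappa}=(1-\kappa)V_\star+\kappa P$ is automatically a one-parameter family of convex functions. (Observe that the previous proposition only gives that $V_\star$ is affine on a dense open set; to exclude a second derivative --- as a measure --- supported on a nowhere-dense set, i.e.\ corners or a singular Cantor-type part, I would use a \emph{global} affine competitor rather than localized bumps.) Write $q(x):=u_2^2(x)-u_1^2(x)$; by Lemma~\ref{lemma2.1} and \eqref{eq2.1} there are $0\le x_-<x_+\le\pi$ with $q>0$ on $(0,x_-)\cup(x_+,\pi)$ and $q<0$ on $(x_-,x_+)$, and $x_-=0$ and $x_+=\pi$ cannot both hold, since $\int_0^\pi q\,dx=\|u_2\|^2-\|u_1\|^2=0$. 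Let $\ell_0$ be the affine function agreeing with $V_\star$ at $x_-$ and at $x_+$. Convexity of $V_\star$ implies that its graph lies on or below the chord $\ell_0$ on $[x_-,x_+]$ and on or above the affine extension of that chord outside $[x_-,x_+]$; equivalently, $\ell_0-V_\star\ge0$ on $(x_-,x_+)$ and $\ell_0-V_\star\le0$ on $(0,x_-)\cup(x_+,\pi)$, which is the sign opposite to $q$ on each piece.

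Take $P:=\max(\ell_0,0)$. One checks (see below) that $P$ is convex, lies in $[0,M]$, and still satisfies $P-V_\star\ge0$ where $q<0$ and $P-V_\star\le0$ where $q>0$, so $V_{\star\kappa}$ stays in the class for $\kappa\in[0,1]$. Since $\partial_\kappa V_{\star\kappa}=P-V_\star$ is bounded and measurable, the Feynman--Hellmann formula \eqref{FH} (with its degenerate variant from before \eqref{FH}, $\lambda_2=\min(\hat\lambda_2,\hat\lambda_3)$, if $\lambda_2$ is not simple) gives
\[
\Gamma'(0)=\int_0^\pi q(x)\bigl(P(x)-V_\star(x)\bigr)\,dx\ \le\ 0 .
\]
As $V_\star$ minimizes $\Gamma$ we also have $\Gamma'(0)\ge0$, hence $\Gamma'(0)=0$; since $q\ne0$ a.e., this forces $P=V_\star$ a.e. If $\ell_0\ge0$ throughout, then $P=\ell_0$ and $V_\star=mx+b$, as claimed.

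The step I expect to require the most care is the class-membership bookkeeping, not the identity above. From $\ell_0\le V_\star\le M$ on $(0,x_-)\cup(x_+,\pi)$ and $0\le\ell_0\le M$ on $[x_-,x_+]$ one sees that $\ell_0$ never exceeds $M$ and can fall below $0$ only near one of the two endpoints (the one the slope of $\ell_0$ points away from), which is what makes $P=\max(\ell_0,0)$ legitimate and preserves the sign inequalities against $q$; in the exceptional case $\Gamma'(0)=0$ leaves $V_\star=\max(\ell_0,0)$, affine except for a corner where it meets $0$, and this residual configuration must be excluded by an extra localized perturbation as in \S3.1 (or by comparing its gap with a constant's). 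Beyond this, one must use one-sided $\kappa$-derivatives --- the minimizer need not be unique and $V_{\star\kappa}$ is known to be admissible only for $\kappa\in[0,\varepsilon)$ --- absorb the possible double degeneracy of $\lambda_2$ via degenerate perturbation theory exactly as before \eqref{FH}, and treat the boundary cases $x_-=0$ and $x_+=\pi$, in which one of the outer intervals is empty; these are routine but should be recorded.
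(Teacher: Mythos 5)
Your route is genuinely different from the paper's. The paper writes $V_{\star}''=\sum_n\alpha_n\delta(x-x_n)$, $\alpha_n\ge0$, and kills each atom with a localized piecewise-affine perturbation supported in $[0,x_n]$, $[x_n,\pi]$ or $[x_-,x_+]$, using the sign pattern \eqref{eq2.1}; you instead test against one global competitor, the chord $\ell_0$ through $(x_\pm,V_\star(x_\pm))$ truncated at $0$, and use convexity to get the pointwise inequality $q\,(P-V_\star)\le0$, so that minimality plus \eqref{FH} forces $P=V_\star$ a.e. This buys two things the paper's argument does not have: you never need the tacit (and, strictly speaking, unjustified) assumption that the singular part of $V_\star''$ is purely atomic, and your deformation $(1-\kappa)V_\star+\kappa P$ visibly stays inside $\mathcal{C}_M$, whereas the paper's perturbations can leave the band $0\le V_1\le M$ without comment. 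The degenerate-eigenvalue and one-sided-derivative bookkeeping is the same in both arguments.

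The genuine gap is the residual case you flag but do not close: when $\ell_0$ dips below $0$, your identity only yields $V_\star=\max(\ell_0,0)$, which is affine except for one corner at the point $c$ (necessarily with $c\le x_-$ or $c\ge x_+$) where it meets the level $0$, and this is not of the form $mx+b$. Your suggested patches are not routine as stated: the perturbations of \S3.1 are built for $\mathcal{S}_M$ and do not preserve convexity; ``comparing with a constant'' is only meaningful when $p=1$, $V_0=0$, and is itself the hard point of Lavine-type arguments; and, most seriously, the natural convexity-preserving perturbation that reduces the kink at $c$, say adding $-t(x-c)\chi_{[c,\pi]}(x)$ when $c\ge x_+$, pushes the potential below $0$ precisely because the constraint $V_1\ge0$ is active on the flat part. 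A workable repair is to use that $\int_0^\pi q\,dx=0$, so adding a constant changes neither $\Gamma$ nor the first-order term: perturb by $-t(x-c)\chi_{[c,\pi]}(x)+t(\pi-c)$ (symmetrically on the left if $c\le x_-$), which is admissible for small $t>0$ whenever $\max V_\star<M$ (or in the unbounded class $\mathcal{C}$), and whose gap derivative $\int_c^\pi-(x-c)\,q\,dx$ is strictly negative by \eqref{eq2.1}, contradicting minimality; the boundary case $\max V_\star=M$ then needs a separate word. Until an argument of this kind is recorded, your proof establishes the proposition only up to this hockey-stick configuration.
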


\begin{proof}
We begin by recalling that a convex function has right and left derivatives at every interior point of an interval, with only at most 
a countable number of points at which the right and left derivatives can differ.  Restating this in the language of distributions,
we may write
\[
V_{\star}''=\sum\alpha_{n}\delta(x-x_{n}),\quad \alpha_{n}\geq 0.
\]
We need to show that all $\alpha_{n}=0$.

Suppose that there exists $ x_{n}\leq x_{-}$ or $x_n \geq x_{+}$ with $\alpha_{n}>0$. Then, assuming $x_{n}\leq x_{-}$,
consider a perturbation $P(x):=(x-x_{n})\chi)_{[0,x_{n}]}(x)$, which preserves convexity.  Then
\[
\Gamma'(0)=\int_{0}^{x_{n}}(x-x_{n})(u_{2}^{2}-u_{1}^{2}) dx <0,
\]
contradicting the assumed minimality of $\Gamma(V_0 + V_\star)$.
The same argument works assuming $x_{n}\geq x_{+}$.

If $x_{-}< x_{n}<x_{+}$, then we take 
\begin{equation}
P(x)=
\begin{cases}
   \frac{x-x_{-}}{x_{n}-x_{-}} & \text{if}\quad  x<x_{n}  \\
   \frac{x_{+}-x}{x_{+}-x_{n}} & \text{if}\quad x\geq x_{n}.
\end{cases}
\end{equation}
For small $t$, $V_{\star}(x)+tP(x)$ is convex, but again 
$$\Gamma'(0)=\int_{0}^{x_{n}}(x-x_{n})(u_{2}^{2}-u_{1}^{2}) dx <0,$$
so this possibility also contradicts the assumed minimality.
\end{proof}

When $V_0 = 0$ as in \cite{Lav}, a particular argument is needed to establish that $m=0$, i.e., that the optimal
potential is a constant.  This is generally not the case when there is a background potential $V_0$, as can be seen from the 
trivial example where $V_0 = x$:  In this example, $V_\star$ must equal $- x + b$, because of the original result of \cite{Lav}.

Lastly, we turn to the question about what happens if there is no finite upper bound $M$.  In prior work
going back to Lavine it has been customary to assume
without comment that $V(x)$ is continuous on the closed interval 
$[0, \pi]$, and therefore bounded.  This leaves open the possibility that there are convex, resp. single-well potentials, which diverge to $+\infty$ as $x$ tends to $0$ or $\pi$, which have lower fundamental gaps than 
the lim-inf of the minimal gaps for $M < \infty.$  In our next result we show that this is not possible.  For simplicity we restrict ourselves to Liouville normal form and Dirichlet boundary conditions.

\begin{theorem}
Consider the eigenvalue problem
\[
H(1, V) u = \lambda u,
\]
with homogeneous Dirichlet boundary conditions at $0$ and $\pi$, and $V(x) \ge 0$
either convex or of single-well form.  Then for any $\epsilon > 0$ and $M > \lambda_2(V)$, 
there exists $V_{M, \epsilon} \in \mathcal{S}_M$ or respectively 
$V_{M, \epsilon} \in \mathcal{C}_M$, such that
$\lambda_k(V) - \epsilon \le \lambda_k(V_M) \le \lambda_k(V)$
for $k=1,2$. 
\end{theorem}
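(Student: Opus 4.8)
The plan is to build $V_{M,\epsilon}$ by truncating $V$ at height $M$ (and re-convexifying, in the convex case), to get the inequality $\lambda_k(V_{M,\epsilon})\le\lambda_k(V)$ for free from monotonicity of the eigenvalues in the potential, and to get the reverse inequality, up to $\epsilon$, from a monotone-convergence argument showing that the eigenvalues of the truncated problems converge to those of the original one as $M\to\infty$.

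First I would fix the approximants. In the single-well case set $V_M:=\min(V,M)$, which again lies in $\mathcal S_M$: truncation preserves both the single-well shape and the bound $0\le V_M\le M$. In the convex case $\min(V,M)$ need not be convex, so instead take $V_M$ to be the lower convex envelope of $\min(V,M)$, i.e.\ the largest convex function lying pointwise below $\min(V,M)$; since the constant $\inf V\ge 0$ is such a minorant we have $0\le V_M\le\min(V,M)\le M$, so $V_M\in\mathcal C_M$, and one checks that $V_M$ equals $V$ on a closed subinterval $[p_M,q_M]\subset(0,\pi)$ and is affine on $[0,p_M]$ and on $[q_M,\pi]$ (the supporting lines of $V$ at $p_M$ and $q_M$, placed so as to take the value $M$ at the respective endpoint). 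In both cases $V_M\le V$, the family $\{V_M\}$ is non-decreasing in $M$, and $V_M(x)\uparrow V(x)$ for every $x\in(0,\pi)$ as $M\to\infty$ (in the convex case $p_M\downarrow 0$, $q_M\uparrow\pi$, with $p_M=0$ or $q_M=\pi$ already once $M$ exceeds $V$ near that endpoint). Since $V_M\le V$ pointwise, the min--max characterization of the eigenvalues gives $\lambda_k(V_M)\le\lambda_k(V)$ for all $k$ — one of the two inequalities. (In the intended application only large $M$ is relevant; when $V$ is bounded one may simply take $V_{M,\epsilon}=V$ once $M\ge\|V\|_\infty$.)

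Next I would establish the lower bound through the claim that $\lambda_k(V_M)\to\lambda_k(V)$ for $k=1,2$ as $M\to\infty$; choosing $M$ large enough (depending on $\epsilon$) then yields $\lambda_k(V)-\epsilon\le\lambda_k(V_M)$, and we set $V_{M,\epsilon}:=V_M$. Fix a real $z<1$; since $V\ge 0$ and the boundary conditions are Dirichlet, all the operators $H(1,V_M)$ and $H(1,V)$ are bounded below by the first Dirichlet eigenvalue $1$ of $-d^2/dx^2$, so $T_M:=(H(1,V_M)-z)^{-1}$ and $T_\infty:=(H(1,V)-z)^{-1}$ are positive operators, and they are compact since each problem has purely discrete spectrum. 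As $M\nearrow\infty$ the closed nonnegative forms $q_M(u)=\int_0^\pi(|u'|^2+V_Mu^2)\,dx$ on $H^1_0(0,\pi)$ increase to the closed, densely defined form $q_\infty(u)=\int_0^\pi(|u'|^2+Vu^2)\,dx$ with domain $\{u\in H^1_0(0,\pi):\int Vu^2<\infty\}$, so by the monotone convergence theorem for quadratic forms (\cite{Kato}, \S VIII.3, and standard references) the operators converge in the strong resolvent sense and in fact $T_M\downarrow T_\infty$ strongly.

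Finally I would upgrade this to norm convergence. The operators $T_M-T_\infty$ are compact, nonnegative, and decrease to $0$ strongly, so the functionals $u\mapsto\langle(T_M-T_\infty)u,u\rangle$ are weakly continuous on the closed unit ball $B$ of $L^2(0,\pi)$ and decrease pointwise to $0$; since $B$ is weakly compact, Dini's theorem gives $\sup_{u\in B}\langle(T_M-T_\infty)u,u\rangle\to 0$, i.e.\ $\|T_M-T_\infty\|\to 0$. The $k$-th eigenvalue of a compact self-adjoint operator being $1$-Lipschitz in the operator norm, it follows that $(\lambda_k(V_M)-z)^{-1}\to(\lambda_k(V)-z)^{-1}$ and hence $\lambda_k(V_M)\to\lambda_k(V)$ for every $k$, in particular $k=1,2$; together with $\lambda_k(V_M)\le\lambda_k(V)$ this proves the theorem. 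I expect the main obstacle to be the identification, inside the monotone-convergence step, of the strong limit of the resolvents with $(H(1,V)-z)^{-1}$ when $V$ is unbounded near an endpoint: one must check that $q_\infty$ is closed and densely defined and that the self-adjoint operator it generates is precisely the realization of $H(1,V)$ used in the paper. For $V\ge 0$ with Dirichlet data this is the canonical form-sum (Friedrichs-type) realization, which in the limit-point case is the unique self-adjoint extension, so it matches; but this is where care is needed. (The convex-envelope construction and the verification $V_M\uparrow V$ are routine, though they must be spelled out.)
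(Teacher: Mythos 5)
Your proposal is correct, but it follows a genuinely different route from the paper's. The paper leaves $V$ untouched except on a small interval $[0,\ell_1]$ adjacent to the endpoint where $V$ diverges: it deforms $V(\kappa,x)=(1-\kappa)V(x)+\kappa M$ there and controls the resulting decrease of $\lambda_k$ quantitatively via the Feynman--Hellmann formula \eqref{FH}, an integration by parts giving $\lambda_k'(\kappa)\ge -u_k(\ell_1)u_k'(\ell_1)$, the uniform bound $\|u_k\|_\infty\le C\lambda_k^{1/4}$, and convexity of $u_k$ on the region where $V>\lambda_2$; choosing $\ell_1$ explicitly in terms of $\epsilon$ yields $\lambda_k'\ge-\epsilon$, and integrating in $\kappa$ gives the lower bound (with a parallel convexity-preserving deformation in the convex case). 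You instead truncate globally, taking $\min(V,M)$ (convexified by the lower convex envelope in the convex case), and use soft operator-theoretic machinery: monotone convergence of quadratic forms, the upgrade from strong to norm resolvent convergence via compactness plus Dini, and Lipschitz dependence of eigenvalues of compact self-adjoint operators. Your route avoids all eigenfunction estimates and yields convergence of every $\lambda_k$ at once, but it is non-quantitative: it produces an admissible approximant only for $M$ sufficiently large depending on $\epsilon$, whereas the statement as written fixes an arbitrary $M>\lambda_2(V)$; to be fair, the paper's construction also leaves $V$ unchanged away from the modified endpoint intervals and so in general only lands in $\mathcal{S}_{M'}$ for some larger $M'$, and the intended application (comparison with $\lim_{M\to\infty}\Gamma_\star(M)$) needs only large $M$, so this looseness is shared. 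The one genuinely delicate step in your argument is the one you flag yourself: identifying the operator generated by the limiting form $\int_0^\pi\left(|u'|^2+Vu^2\right)dx$ on $\{u\in H^1_0(0,\pi):\int_0^\pi Vu^2\,dx<\infty\}$ with the realization of $H(1,V)$ used in the paper; this is automatic in the limit-point case, and in the limit-circle case it amounts to the Friedrichs/principal-solution characterization of the Dirichlet-type condition, a point the paper itself passes over silently, so your caveat is appropriate rather than a defect.
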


\begin{proof}
We consider the case where $\lim_{x \downarrow 0}V(x) = +\infty$; the case where there is a divergence
as $x \uparrow \pi$ is the same after a simple change of variable.  We also 
assume without loss of generality that $\min V = 0$ and that $u_{1,2} > 0$ on a neighborhood of $0$.
We begin by observing if $V \ge 0$ on $[0, \pi]$, then there exists a constant $C$
{\em independent of $V$} such that the normalized eigenfunctions satisfy
\[
\|u_k\|_{\infty} \le C \lambda_k^{\frac 1 4}.
\]
(In Example 2.1.8 of \cite{Dav} this is shown with $C =  \mathrm{e}^{1/8 \pi}$.)

Because $V$ diverges at $0$, there is an interval $[0, \ell]$, $\ell > 0$ on which $V - \lambda_2 \ge 0$, and therefore $u_{1,2}$ are convex.  In particular, for any 
$\ell_0 < \ell$, 
if $x < \ell_0$, then
\[
\psi'(x) \le \frac{\psi(\ell)}{\ell-x}.
\]
This estimate holds for
{\em any} $\tilde{V}$ such that
$\tilde{V} > \lambda_2$ for $x < \ell_0 < \ell$, 
not only for the originally assumed $V$.

We first consider the case of single-well potentials and finish the argument.
Fix some $M > \lambda_2(V)$ and consider the perturbation where 
\[
V(x) \to V(\kappa, x) := (1-\kappa) V(x) + \kappa M
\]
when $x \le \ell_1$ but leaving $V$ unchanged for larger $x$.  
This family of functions is of single-well form for all 
$0 \le \kappa \le 1$ and is nonincreasing in $\kappa$.
Here $\ell_1 < \ell/2$ will be chosen later depending on 
$\epsilon$.  According to \eqref{FH},
\[
\lambda_k^\prime(\kappa) = - \int_0^{\ell_1} u_k^2(x) \left(V(x) - M\right) dx < 0,
\]
for all $0 \le \kappa \le 1$.  On the other hand,
\begin{align*}
\lambda_k^\prime(\kappa) &= - \int_0^{\ell_1} u_k^2(x) \left(V(x) - M\right) dx \\
&\ge - \int_0^{\ell_1} u_k^2(x) \left(V(x) - \lambda_k(\kappa) \right) dx\\
&= - \int_0^{\ell_1} u_k(x) u_k^{\prime\prime}(x) dx\\
&= - \int_0^{\ell_1} \left(u_k(x) u_k'(x)\right)^\prime dx
+ \int_0^{\ell_1} \left(u_k^\prime(x)\right)^2 dx\\
&\ge -u_k(\ell_1)u'_k(\ell_1).
\end{align*}
(For notational simplicity we have not indicated explicitly the dependence of these
eigenfunctions on $\kappa$.)
Appealing to the convexity of the eigenfunctions and knowing that their maximum is not attained 
in the interval $[0, \ell]$, we get
\[
\lambda_k^\prime \ge - \left(C \lambda_2^{\frac 1 4}\right)^2 \frac{\ell_1}{\ell} \frac{\ell/2}{\ell/2 - \ell_1}.
\]
Finally, for any $\epsilon > 0$, $\epsilon < 1$, we can choose 
\[
\ell_1 \le \frac{\ell}{\left(C \lambda_2^{\frac 1 4}\right)^2 + 2} \,\,\epsilon
\]
to conclude that 
\[
\lambda_k^\prime \ge - \epsilon.
\]
The claim follows by defining $V_M = V(1,x)$ and observing that
\[
\lambda_k(V_m) = \lambda_k(V) + \int_0^1 \lambda^\prime(\kappa) \mathrm{d}\kappa. 
\]

A closely 
similar argument yields the same conclusion for convex potentials, except that the perturbation must
now preserve convexity rather than the single-well property.  A suitable family of perturbations is
\[
V(x) \to V(\kappa, x) := (1-\kappa) V(x) + \kappa \left(V(\ell) + V^\prime(\ell)(x-\ell)\right)
\]
on a sufficiently small interval $[0, \ell]$ to guarantee that 
$V_{M, \epsilon} > M > \lambda_2(M)$.  We omit the details, being quite similar to the case treated above.
\end{proof}

In particular, when $V_0 = 0$, then Lavine's result that the constant potentials are the (only) minimizers of the fundamental gap holds
without the assumption that $V(x)$ is continuous at $x = 0$ and $\pi$, at least with Dirichlet boundary conditions.

\medskip
{\bf Acknowledgments.}  The authors wish to thank 
Joachim Stubbe and Timo Weidl for useful conversations and insights.

\end{document}